\documentclass{amsart}

\newtheorem{theorem}{Theorem}[section]
\newtheorem{thm}[theorem]{Theorem}

\newtheorem{lem}[theorem]{Lemma}

\theoremstyle{definition}

\newtheorem{prob}[theorem]{Problem}

\theoremstyle{remark}

\numberwithin{equation}{section}

\def\ocirc#1{\ifmmode\setbox0=\hbox{$#1$}\dimen0=\ht0
    \advance\dimen0 by1pt\rlap{\hbox to\wd0{\hss\raise\dimen0
    \hbox{\hskip.2em$\scriptscriptstyle\circ$}\hss}}#1\else
    {\accent"17 #1}\fi}

\DeclareMathOperator{\Gr}{Gr}
\DeclareMathOperator{\Mon}{Mon}
\DeclareMathOperator{\Cl}{Cl}

\newcommand{\nin}{\notin}

\renewcommand{\L}{\mathfrak L}

\renewcommand{\AA}{\mathcal A}
\newcommand{\K}{\mathfrak K}

\newcommand{\M}{\mathfrak M}

\newcommand{\mult}{\times}

\newcommand{\ini}{\mathbin{\triangleleft}}

\begin{document}

\title[Universality of the lattice of transformation monoids]{Universality of the lattice of transformation monoids}

\author{Michael Pinsker}
    \address{\'{E}quipe de Logique Math\'{e}matique\\ Universit\'{e} Denis Diderot -- Paris 7\\
	UFR de Math\'{e}matiques\\
	75205 Paris Cedex 13, France}
    \email{marula@gmx.at}
    \urladdr{http://dmg.tuwien.ac.at/pinsker/}
    \thanks{Research of the first author supported by an APART-fellowship of the Austrian Academy of Sciences.}

\author{Saharon Shelah}
    \address{Institute of Mathematics\\ The Hebrew University of Jerusalem\\91904 Jerusalem, Israel, and Department of Mathematics, Rutgers University, New Brunswick, New Jersey 08854}
    \email{shelah@math.huji.ac.il}
    \urladdr{http://shelah.logic.at}
\thanks{Research of the second author supported by
  German-Israeli Foundation for Scientific Research \& Development
  Grant No. 963-98.6/2007. Publication 983 on Shelah's list.}
  \thanks{The authors would like to thank an anonymous referee for his valuable comments which led to significant improvements in the presentation of the paper.}

\subjclass[2010]{Primary 06B15; secondary 06B23; 20M20}
\keywords{algebraic lattice; transformation monoid; submonoid; closed sublattice}


\commby{Julia Knight}

\begin{abstract}
 The set of all transformation monoids on a fixed set of infinite cardinality $\lambda$, equipped with the order of inclusion, forms a complete algebraic lattice $\Mon(\lambda)$ with $2^\lambda$ compact elements. We show that this lattice is universal with respect to closed sublattices, i.e., the closed sublattices of $\Mon(\lambda)$ are, up to isomorphism, precisely the complete algebraic lattices with at most $2^\lambda$ compact elements.
\end{abstract}

\maketitle

\section{Definitions and the result}

Fix an infinite set -- for the sake of simpler notation, we identify the set with its cardinality $\lambda$. By a \emph{transformation monoid} on $\lambda$ we mean a subset of $\lambda^\lambda$ which is closed under composition and which contains the identity function. The set of transformation monoids acting on $\lambda$, ordered by inclusion, forms a complete lattice $\Mon(\lambda)$, in which the meet of a set of monoids is simply their intersection. This lattice is \emph{algebraic}, i.e., every element is a join of compact elements -- an element $a$ in a complete lattice $\L=(L,\vee,\wedge)$ is called \emph{compact} iff whenever $A\subseteq L$ and $a\leq\bigvee A$, then there is a finite $A'\subseteq A$ such that $a\leq\bigvee A'$. In the case of $\Mon(\lambda)$, the compact elements are precisely the finitely generated monoids, i.e., those monoids which contain a finite set of functions such that every function of the monoid can be composed from functions of this finite set. 
Consequently, the number of compact elements of $\Mon(\lambda)$ equals $2^\lambda$.

It is well-known and not hard to see that for any cardinal $\kappa$, the algebraic lattices with at most $\kappa$ compact elements are, up to isomorphism, precisely the subalgebra lattices of algebras whose domains have $\kappa$ elements (\cite{BirkhoffFrink}; see also Theorem~48 in the textbook~\cite{Graetzer}). For example, $\Mon(\lambda)$ is the subalgebra lattice of the algebra which has domain $\lambda^\lambda$, a binary operation which is the function composition on $\lambda^\lambda$, as well as a constant operation whose value is the identity function on $\lambda$.

Let $\K$ and $\L$ be complete lattices such that the domain of $\L$ is contained in the domain of $\K$. Then $\L$ is called a \emph{complete sublattice} of $\K$ iff all joins and meets in $\L$ equal the corresponding joins and meets in $\K$; in most of the literature where the notion is mentioned, and in particular in the widely used textbook~\cite{Graetzer}, this includes empty joins and meets, and so in this definition the respective largest and smallest elements of $\L$ and $\K$ must coincide. When the condition of ``complete sublattice'' holds for all \emph{non-empty} joins and meets, then we say that $\L$ is a \emph{closed sublattice} of $\K$; the difference here is that the respective largest and smallest elements of $\L$ and $\K$ need not coincide. In other words, $\L$ is a closed sublattice of $\K$ if and only if it is a complete sublattice of a closed interval of $\K$.

Now let $\K$ be a complete algebraic lattice and $\L$ be a closed sublattice of $\K$. Then it is a folklore fact that $\L$ is algebraic as well, and that the number of compact elements of $\L$ equals at most the corresponding number for $\K$. For the comfort of the reader, let us sketch the argument showing this. Denote for every compact element $x$ of $\K$ which is below some element of $\L$  the smallest element of $\L$ which is above $x$ by $x^\L$. Then it is not hard to see that the compact elements of $\L$ are precisely the elements of the form $x^\L$. Hence, the mapping that sends every $x$ as above to $x^\L$ shows that $\L$ does not possess more compact elements than $\K$, and it also follows easily from the above that $\L$ is algebraic.

By this observation, any closed sublattice of $\Mon(\lambda)$ is algebraic and has at most $2^\lambda$ compact elements. In this paper, we prove the converse of this fact. This had been stated as an open problem in~\cite[Problem C]{GoldsternPinsker08}. We remark that it is clear from the context in~\cite{GoldsternPinsker08} that the word ``subinterval'' in the formulation of Problem~C is an error; it is Problem~B which asks about subintervals. We also note that while in~\cite{GoldsternPinsker08} the authors write ``complete sublattice'', they confirmed upon inquiry that they really meant ``closed sublattice'', although they consider the question about complete sublattices interesting as well.

\begin{thm}\label{thm:main}
    $\Mon(\lambda)$ is \emph{universal} for complete algebraic lattices with at most $2^\lambda$ compact elements with respect to closed sublattices, i.e., the closed sublattices of $\Mon(\lambda)$ are, up to isomorphism, precisely the complete algebraic lattices with at most $2^\lambda$ compact elements.
\end{thm}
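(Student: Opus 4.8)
The plan is to prove the nontrivial inclusion: every complete algebraic lattice $\L$ with at most $2^\lambda$ compact elements is isomorphic to a closed sublattice of $\Mon(\lambda)$, the reverse inclusion being the folklore observation recalled above. By the Birkhoff--Frink theorem cited in the introduction I may present $\L$ as the lattice of closed sets of a \emph{finitary} closure operator $\Cl$ on a set $A$ with $|A|\le 2^\lambda$. Encoding $\Cl$ by its finitary rules, $\L$ is determined by a family of implications $\{a_1,\dots,a_n\}\vdash b$ (meaning $b\in\Cl(\{a_1,\dots,a_n\})$) with finite premises, of which there are at most $2^\lambda$. The goal is to attach to each $a\in A$ a \emph{token} function $t_a\in\lambda^\lambda$ and to fix a submonoid $\M_0\subseteq\lambda^\lambda$ so that the assignment $B\mapsto\M_B:=\cl{\M_0\cup\{t_a:a\in B\}}$, defined on closed sets $B$, is an isomorphism of $\L$ onto a closed sublattice of $\Mon(\lambda)$; concretely, the image will have least element $\M_{\Cl(\emptyset)}$ and greatest element $\M_A$ and be closed under all non-empty meets and joins, which is exactly the notion of closed sublattice recalled above.

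First I would set up the encoding, using $|\lambda^\lambda|=2^\lambda$ to accommodate up to $2^\lambda$ tokens together with $2^\lambda$ auxiliary \emph{coding functions}, the latter placed into $\M_0$. For each rule $\{a_1,\dots,a_n\}\vdash b$ I would realise the implication by a single word identity $t_b=r\circ t_{a_1}\circ s_1\circ\cdots\circ t_{a_n}\circ s_n$ with fixed $r,s_1,\dots,s_n\in\M_0$ chosen for that rule, so that closure under $\Cl$ is mirrored by closure under composition and $b\in\Cl(B)$ forces $t_b\in\M_B$. The essential constraint to respect is a counting one: a point of $\lambda$ carries too little information to record which tokens are present, since there are only $\lambda$ points but up to $2^\lambda$ tokens, whereas the graph of a function carries $\lambda$ bits. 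Accordingly, the identity of $B$ must be stored in \emph{which functions} belong to $\M_B$, not in the image of any point, and I would therefore spread the tokens and coding functions over $\lambda$ using a sufficiently independent, e.g.\ almost disjoint, system of markers, arranged so that each nontrivial composite visibly ``remembers'' the finite set of tokens from which it was built and cannot arise in any other way.

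The heart of the proof, and the step I expect to be the main obstacle, is \emph{faithfulness}: one must show that no unintended compositions occur, i.e.\ that the only tokens lying in $\M_B$ are the $t_a$ with $a\in\Cl(B)=B$, and more precisely that to each $f\in\M_B\setminus\M_0$ there is a well-defined nonempty finite set $\mathrm{use}(f)\subseteq B$ of tokens used in $f$, depending only on $f$. This requires a normal-form analysis of words over $\M_0\cup\{t_a:a\in A\}$, and the markers must be engineered so that distinct tokens act on sufficiently disjoint regions of $\lambda$ and no composite of coding functions can forge a token. I expect this bookkeeping---simultaneously guaranteeing that legitimate rule-firings succeed and that nothing else does---to be the most delicate part of the argument.

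Granting the faithfulness statement, the remaining verifications are formal. The map $B\mapsto\M_B$ is order preserving by construction and order reflecting because $B=\{a:t_a\in\M_B\}$, hence injective. Meets are preserved, $\bigcap_i\M_{B_i}=\M_{\bigcap_i B_i}$, since a function common to all $\M_{B_i}$ has its token set contained in every $B_i$ and thus in $\bigcap_i B_i$. Non-empty joins are preserved, $\cl{\bigcup_i\M_{B_i}}=\M_{\Cl(\bigcup_i B_i)}$, because generation from the tokens available in $\bigcup_i B_i$ reproduces exactly the rule-driven closure $\Cl(\bigcup_i B_i)$ and nothing more. As these identities hold for arbitrary non-empty families, the image $\{\M_B:B\text{ closed}\}$ is closed under arbitrary non-empty meets and joins in $\Mon(\lambda)$, so it is a closed sublattice isomorphic to $\L$, completing the proof.
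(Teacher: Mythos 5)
Your reduction to the folklore direction plus an embedding is correct, and you rightly locate the crux: membership of a composite in $\M_B$ must be controlled by a normal-form analysis. But that is exactly where your proposal has a genuine gap, in two respects. First, the faithfulness statement as you phrase it is inconsistent with your own rule encodings: if $t_b=r\circ t_{a_1}\circ s_1\circ\cdots\circ t_{a_n}\circ s_n$, then the single function $t_b$ already has two representations with different token sets, namely $\{b\}$ and $\{a_1,\dots,a_n\}$, so a finite set $\mathrm{use}(f)\subseteq B$ ``depending only on $f$'' cannot exist. What you actually need is a rewriting system on words whose \emph{reduced} forms are unique as functions in $\lambda^\lambda$ (not merely as formal words), oriented so that a premises-word collapses to its conclusion token, together with the observation that closed sets are stable under this collapse --- and proving such uniqueness (confluence at the level of actual functions) for an arbitrary system of tokens and coding functions is precisely the hard content of the theorem. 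Your proposal explicitly defers it (``the most delicate part'') rather than supplying a construction. Second, no construction of the tokens is given; an ``almost disjoint system of markers'' does not by itself yield $2^\lambda$ functions on $\lambda$ with the required independence, and the meet-preservation argument $\bigcap_i\M_{B_i}=\M_{\bigcap_i B_i}$ you sketch depends entirely on the ill-defined $\mathrm{use}(f)$.

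For comparison, the paper makes the normal-form problem tractable by design rather than by analyzing a general word problem: it dispenses with coding functions entirely and constructs a \emph{commuting} family of permutations $f_{(\eta,\phi)}$ of $\lambda$ (shifts on $\lambda\times\mathbb{Z}$ supported on sets obtained from an independent family of subsets of $\lambda$), indexed by a $2^\lambda$-branching tree so that the only identities are the binary ones $f_{(\eta,\phi)}=f_{(\eta\ast\alpha,\ \phi\ast0)}\circ f_{(\eta\ast\alpha,\ \phi\ast1)}$, while inequivalent reduced sequences give distinct composites (the independence property of the ICE). Commutativity eliminates all the word-order issues your non-commutative scheme would face, and the $2^\lambda$-fold branching at each node, combined with an enumeration of the compact elements in which \emph{every} covering $c\le d\vee d'$ is realized as some parent/children triple, replaces your arbitrary $n$-ary rules by binary ones (the $n$-ary case is then a short induction) and makes both the join- and meet-preservation lemmas a few lines each. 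So your plan has the right shape --- tokens generating monoids, rules mirrored by composition identities, uniqueness of reduced forms driving meet preservation --- but as it stands the theorem's entire difficulty is concentrated in the step you leave open, and the naive formulation of that step is false as stated.
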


We remark that it follows from our proof that if $\L$ is an algebraic lattice with at most $2^\lambda$ compact elements, then it is even isomorphic to a closed sublattice of $\Mon(\lambda)$ via an isomorphism which preserves the smallest element (but not the largest, in which case we would obtain a complete sublattice).

\section{Related work and possible extensions}

\subsection{Cardinality questions and non-closed sublattices}

It has been known for a long time that every (not necessarily complete) lattice $\L$ is isomorphic to a sublattice of the lattice of subgroups of a group~\cite{Whitman}. Hence, viewing the group as a monoid, it follows that every lattice is isomorphic to a sublattice of the lattice of submonoids of a monoid $\M$. Strengthenings of the latter statement were obtained in~\cite{Repnitskij}, where it was shown that one can impose a variety of different additional properties on the monoid $\M$. In these theorems, one cannot simply replace ``sublattice'' by ``closed sublattice'', since the lattice of subalgebras of an algebra is algebraic, and hence, by our discussion above, closed sublattices must share the same property.

There are lattices $\L$ of size $\kappa$ for which the corresponding monoid $\M$ must have size at least $\kappa$ as well: for example, it is easy to see that this is the case when $\L$ is the lattice on $\kappa$ induced by the natural order of $\kappa$. When $\L$ is complete, algebraic, and has $\kappa$ compact elements, then the monoid $\M$ will generally have to have size at least $\kappa$, although $\L$ itself might very well have size $2^\kappa$; this improvement on the obvious general lower bound for the cardinality of $\M$ compared to arbitrary lattices is due to the fact that the structure of $\L$ is already determined by the structure of the join-semilattice of its compact elements. A given abstract monoid $\M$ can then in turn be realized as a transformation monoid by letting it act on itself; hence, it will act on a set of size $\kappa$. In our case, this would yield an embedding into $\Mon(2^\lambda)$; the difficulty of our theorem is to find $\L$ as a closed sublattice of the optimal $\Mon(\lambda)$.

\subsection{Closed sublattices of related algebraic lattices}

A \emph{clone} on $\lambda$ is a set of finitary operations on $\lambda$ which is closed under composition and which contains all finitary projections; in other words, it is a set of finitary operations closed under building of terms (without constants). The set of all clones on $\lambda$, ordered by inclusion, also forms a complete algebraic lattice $\Cl(\lambda)$ with $2^\lambda$ compact elements, into which $\Mon(\lambda)$ embeds naturally, since a transformation monoid can be viewed as a clone all of whose operations depend on at most one variable. Universality of $\Cl(\lambda)$ for complete algebraic lattices with at most $2^\lambda$ compact elements with respect to closed sublattices has been shown in~\cite{Pin06AlgebraicSublattices} (the author of~\cite{Pin06AlgebraicSublattices} writes ``complete sublattices'' but really means -- and proves -- ``closed sublattices''); our result is a strengthening of this result.

Observe that similarly to transformation monoids and clones, the set of \emph{permutation groups} on $\lambda$ forms a complete algebraic lattice $\Gr(\lambda)$ with respect to inclusion. By virtue of the identity embedding, $\Gr(\lambda)$ is a complete sublattice of $\Mon(\lambda)$. We do not know the following.

\begin{prob} \label{prob:groups}
 Is every complete algebraic lattice with at most $2^\lambda$ compact elements a closed sublattice of $\Gr(\lambda)$?
\end{prob}

In this context it is worthwhile mentioning that in our proof of Theorem~\ref{thm:main}, we exclusively use monoids which only contain permutations. In other words, we construct for every complete algebraic lattice with at most $2^\lambda$ compact elements a closed sublattice of the interval of $\Mon(\lambda)$ consisting of those monoids on $\lambda$ which are subsets of the symmetric group on $\lambda$. However, our ``permutation monoids" are themselves no groups, and in fact they never contain the inverse of any of their permutations (except the identity) -- adding inverses would collapse the construction.

A related problem is which lattices appear as \emph{intervals} of $\Gr(\lambda)$, $\Mon(\lambda)$, and $\Cl(\lambda)$. This remains open -- for the latter two lattices this question has been posed as an open problem in~\cite{GoldsternPinsker08} (Problems~B and~A, respectively). By a deep theorem due to T\ocirc{u}ma~\cite{Tum89subgroupLattices}, every complete algebraic lattice with $\lambda$ compact elements is isomorphic to an interval of the subgroup lattice of a group of size $\lambda$; from this it only follows that $\Gr(\lambda)$ contains all complete algebraic lattices with at most $\lambda$ compact elements as intervals. Proving that $\Gr(\lambda)$ contains all complete algebraic lattices with at most $2^\lambda$ compact elements as intervals would be a common strengthening of T\ocirc{u}ma's result and a positive answer to Problem~\ref{prob:groups}.

\section{Proof of the theorem}

\subsection{Independent composition engines}

For a cardinal $\kappa$ and a natural number $n\geq 1$, we write $\Lambda^n_\kappa:=\kappa^n\mult2^n$. We set $\Lambda_\kappa:=\bigcup_{n\geq 1}\Lambda^n_\kappa$. For sequences $p,q$, we write $p\ini q$ if $p$ is a non-empty initial segment of $q$ (we consider $q$ to be an initial segment of itself). For $(\eta,\phi)$ and $(\eta',\phi')$ in $\Lambda_\kappa$, we also write $(\eta,\phi)\ini (\eta',\phi')$ if $\eta\ini\eta'$ and $\phi\ini\phi'$. If $p$ is a sequence and $r$ a set, then $p\ast r$ denotes the extension of $p$ by the element $r$. We write $\langle{r}\rangle$ for the one-element sequence containing only $r$.

A sequence $P$ of elements of $\Lambda_\kappa$ is \emph{reduced} iff it does not contain both $(\eta\ast\alpha,\ \phi\ast 0)$ and $(\eta\ast\alpha,\ \phi\ast 1)$ for any $(\eta,\phi)\in\Lambda_\kappa$ and $\alpha\in\kappa$. We call two sequences $P, Q$ \emph{equivalent} iff $P$ can be transformed into $Q$ by permuting its elements.

For a set $W$ and a cardinal $\kappa$, a \emph{$\kappa$-branching independent composition engine ($\kappa$-ICE) on $W$} is an indexed set $\{f_{(\eta, \phi)}: (\eta,\phi)\in \Lambda_\kappa\}$ of permutations on $W$ satisfying all of the following:
\begin{itemize}
    \item[(i)] (Composition) For all $(\eta,\phi)\in\Lambda_\kappa$ and for all $\alpha\in\kappa$ we have $f_{(\eta, \phi)}=f_{(\eta\ast\alpha,\ \phi\ast 0)}\circ f_{(\eta\ast\alpha,\ \phi\ast 1)}$;
    \item[(ii)] (Commutativity) For all $a,b \in\Lambda_\kappa$ we have that $f_{a}\circ f_{b}=f_{b}\circ f_{a}$.
    \item[(iii)] (Independence) Whenever $P=(p_1,\ldots,p_n), Q=(q_1,\ldots,q_m)\subseteq\Lambda_\kappa$ are inequivalent  reduced sequences, then $t_P:=f_{p_1}\circ\cdots\circ f_{p_n}$ and $t_Q:=f_{q_1}\circ\cdots\circ f_{q_m}$ are not equal.
\end{itemize}

Note that by the commutativity of the system, the order of the elements of the sequences $P$ and $Q$ in condition (iii) is not of importance.

\begin{lem}
    There exists a $2^\lambda$-ICE on $\lambda$.
\end{lem}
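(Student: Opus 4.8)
The plan is to realise the permutations $f_a$ as translations inside a commutative group, so that the composition law (i) and commutativity (ii) hold automatically and the whole problem collapses onto the independence condition (iii). Set $\kappa=2^\lambda$ and let $A$ be the free abelian group (written additively) on the generators $\{g_a : a\in\Lambda_\kappa\}$, divided by the subgroup generated by the elements
\[
g_{(\eta,\phi)}-g_{(\eta\ast\alpha,\ \phi\ast0)}-g_{(\eta\ast\alpha,\ \phi\ast1)},\qquad (\eta,\phi)\in\Lambda_\kappa,\ \alpha\in\kappa.
\]
If I can embed $A$ into the symmetric group on $\lambda$, then setting $f_a$ to be the image of $g_a$ produces permutations satisfying (i), since the defining relations hold in $A$ for every $\alpha$, and (ii), since $A$ is abelian. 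Moreover $t_P=t_Q$ will be equivalent to $\sum_{p\in P}g_p=\sum_{q\in Q}g_q$ in $A$, so that (iii) becomes the purely algebraic assertion that inequivalent reduced sequences have distinct images in $A$.

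First I would pin down the structure of $A$ by a triangular elimination. Call a generator $g_{(\eta,\phi)}$ \emph{terminal} if $\phi$ ends in $0$, or if $\phi=\langle 1\rangle$. Rewriting the defining relation as $g_{(\eta\ast\alpha,\ \phi\ast1)}=g_{(\eta,\phi)}-g_{(\eta\ast\alpha,\ \phi\ast0)}$ expresses each non-terminal generator (those whose $\phi$ ends in $1$ and has length $\ge 2$) in terms of one strictly shorter generator and one terminal generator. Each relation solves for a distinct non-terminal generator and each non-terminal generator is solved for by exactly one relation, so the associated change of basis of the ambient free group is triangular with respect to the length of $\phi$ and has $\pm1$ on the diagonal; hence $A$ is free abelian with the terminal generators as a basis, and in particular it is torsion-free of rank $2^\lambda$. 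Iterating the substitution gives an explicit expansion of each $g_{(\eta,\phi)}$ as a $\pm1$-combination of basis elements, with a unique basis term of maximal $\phi$-length: this is $g_{(\eta,\phi)}$ itself with coefficient $+1$ when $\phi$ ends in $0$, and $g_{(\eta,\psi\ast0)}$ with coefficient $-1$ when $\phi=\psi\ast1$. I will call this distinguished term the \emph{top} of $g_{(\eta,\phi)}$.

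The heart of the argument, and the step I expect to be the main obstacle, is independence, because one must rule out accidental cancellations among the expansions. After deleting common elements I may assume that $P$ and $Q$ are disjoint reduced multisets with $\sum_P g_p=\sum_Q g_q$, and I must deduce that both are empty. The crucial observation is that two generators share the same top precisely when they are equal or \emph{siblings}, that is, of the form $(\eta,\psi\ast0)$ and $(\eta,\psi\ast1)$ — exactly the pattern forbidden in a reduced sequence. Now let $L$ be the largest $\phi$-length occurring in $P\cup Q$. Since $L$ is maximal, the basis elements of $\phi$-length $L$ receive contributions in $\sum_P g_p-\sum_Q g_q$ only from the length-$L$ members of $P$ and $Q$, and each such member enters only through its top, with coefficient $\pm1$. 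Writing out the resulting coefficient equations and using that neither $P$ nor $Q$ contains a sibling pair, a short nonnegativity argument on the multiplicities forces the length-$L$ parts of $P$ and $Q$ to coincide; being disjoint, they are then empty, contradicting the choice of $L$ unless $P\cup Q=\emptyset$. This establishes (iii) inside $A$.

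It remains to embed $A$ into the symmetric group on $\lambda$. Partitioning $\lambda$ into $\lambda$ countably infinite blocks and letting the $i$-th copy of the integers act by translation on the $i$-th block realises the direct power $\mathbf{Z}^{\lambda}$ faithfully inside $\mathrm{Sym}(\lambda)$. This power is torsion-free of rank $2^\lambda$ (its tensor product with the rationals is a rational vector space of cardinality $2^\lambda$), hence contains a free abelian subgroup of rank $2^\lambda$, into which the rank-$2^\lambda$ free abelian group $A$ embeds. Composing these maps embeds $A$ into $\mathrm{Sym}(\lambda)$, and the images $f_a$ of the generators $g_a$ then form the desired $2^\lambda$-ICE on $\lambda$.
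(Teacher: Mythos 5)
Your proof is correct, but it takes a genuinely different route from the paper. The paper constructs the ICE concretely: it takes an independent family $\AA$ of subsets of $\lambda$, forms sets $B_{(\eta,\phi)}$ as finite intersections of members of $\AA$ and their complements so that each $B_{(\eta,\phi)}$ splits as the disjoint union of $B_{(\eta\ast\alpha,\phi\ast 0)}$ and $B_{(\eta\ast\alpha,\phi\ast 1)}$ (yielding condition (i) on the nose), lets $f_{(\eta,\phi)}$ shift the $\mathbb{Z}$-coordinate on $B_{(\eta,\phi)}\times\mathbb{Z}\subseteq\lambda\times\mathbb{Z}$, and verifies (iii) by exhibiting a point $(\alpha,0)$ with $\alpha\in B_{(\eta,\phi)}\setminus\bigcup_{q\in Q}B_q$ that $t_P$ moves and $t_Q$ fixes. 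You instead work with the \emph{universal} abelian group $A$ presented by exactly the relations in (i), prove it is free on the terminal generators by a well-founded elimination (your triangular-substitution argument is sound, since each non-terminal generator is solved by a unique relation in terms of one strictly shorter and one terminal generator, so the recursion terminates and yields an explicit normal form), reduce (iii) to a coefficient computation on the maximal-length ``top'' terms of these normal forms --- where reducedness of $P$ and $Q$ enters precisely through your observation that only forbidden sibling pairs share a top --- and finally embed the resulting free abelian group of rank $2^\lambda$ into the blockwise-translation copy of $\mathbb{Z}^\lambda$ inside the symmetric group on $\lambda$. I checked the multiplicity argument at maximal length $L$: the coefficient of a length-$L$ basis element $b$ with sibling $s$ is $(\#_P(b)-\#_P(s))-(\#_Q(b)-\#_Q(s))$, and reducedness plus disjointness of $P$ and $Q$ indeed force all four multiplicities to vanish, so the argument goes through. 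Note that both constructions ultimately live inside an abelian group of blockwise shifts, so the two proofs are cousins; what yours buys is modularity --- it isolates the combinatorics in the universal group $A$ and makes transparent that the only representation-theoretic input needed is a free abelian subgroup of $\mathrm{Sym}(\lambda)$ of rank $2^\lambda$ --- while the paper's independent family delivers relations and faithfulness simultaneously and gives an explicit witness point for (iii). One small imprecision to fix in writing it up: your displayed formula for the top, ``$g_{(\eta,\psi\ast 0)}$ with coefficient $-1$ when $\phi=\psi\ast 1$,'' as literally stated includes the case $\psi$ empty, but $g_{(\eta,\langle 1\rangle)}$ is terminal by your own definition and is its own top with coefficient $+1$; this degenerate case matters, and works in your favor, because the length-one pairs $(\eta,\langle 0\rangle),(\eta,\langle 1\rangle)$ are \emph{not} forbidden in reduced sequences, and consistently they do not share a top.
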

\begin{proof}
    We show that there exists a $2^\lambda$-ICE on $W:=\lambda\mult \mathbb{Z}$. Let 
    $$
    \AA:=\{A_{(\eta,\phi)}: (\eta,\phi)\in \Lambda_{2^\lambda} \text{ and the last entry of } \phi \text{ equals } 0\}
    $$
    be an independent family of subsets of $\lambda$, i.e., any non-trivial finite Boolean combination of these sets is non-empty (see, for example,~\cite[Lemma~7.7]{Jec03} for a proof of the existence of such a family). For all $(\eta,\phi)\in \Lambda_{2^\lambda}$, set $\#A_{(\eta,\phi)}$ to equal $A_{(\eta,\phi)}$, if the last entry of $\phi$ equals $0$, and $\lambda\setminus A_{(\eta,\phi')}$ otherwise, where $\phi'$ is obtained from $\phi$ by changing the last entry to $0$. Now define $B_{(\eta,\phi)}:=\bigcap_{s\ini (\eta,\phi)} \#A_s$, for all $(\eta,\phi)\in \Lambda_{2^\lambda}$. 
    
    We will define the $2^\lambda$-ICE by means of the family $\{B_{(\eta,\phi)}: (\eta,\phi)\in \Lambda_{2^\lambda}\}$ as follows. For all $(\eta,\phi)\in\Lambda_{2^\lambda}$ and all $(\alpha,i)\in W$, we set
    $$
        f_{(\eta,\phi)}(\alpha,i)=
            \begin{cases}
                (\alpha, i+1) &, \text{ if } \alpha\in B_{(\eta,\phi)},\\
                (\alpha,i) &, \text{ otherwise.}
            \end{cases}
    $$
   
  We claim that this defines a $2^\lambda$-ICE on $W$. Clearly,~(ii) of the definition is satisfied. Property~(i) is a direct consequence of the fact that for all $(\eta,\phi)\in \Lambda_{2^\lambda}$ and all $\alpha< \lambda$, $B_{(\eta,\phi)}$ is the disjoint union of $B_{(\eta\ast\alpha,\phi\ast 0)}$ and $B_{(\eta\ast\alpha,\phi\ast 1)}$. 
  
  To see~(iii), let $P$ and $Q$ be reduced and inequivalent. We first claim that we can assume that $P$ and $Q$ have no entries in common. So say that  some $(\eta,\phi)$ of $\Lambda_{2^\lambda}$ occurs in both $P$ and $Q$. Then, since $t_P=t_Q$ if and only if $f_{(\eta,\phi)}^{-1}\circ t_P=f_{(\eta,\phi)}^{-1}\circ t_Q$, proving that $t_P$ is not equal to $t_Q$ is the same as proving that $t_{P'}$ is not equal to $t_{Q'}$, where for $X\in\{P,Q\}$ we write  $X'$ for the sequence obtained from $X$ by removing one occurrence of $(\eta,\phi)$. Observe that as subsequences of $P$ and $Q$ respectively, $P'$ and $Q'$ are still reduced. Repeating this process, we may indeed assume that $P$ and $Q$ have no common entries. Now to prove that $t_P\neq t_Q$, let $(\eta,\phi)$ be an element of $\Lambda_{2^\lambda}$ which appears either in $P$ or in $Q$ and which has the property that $(\eta',\phi')= (\eta,\phi)$ for all pairs $(\eta',\phi')$ which appear in $P$ or $Q$ and for which $(\eta',\phi')\ini (\eta,\phi)$. Say without loss of generality that $(\eta,\phi)$ appears in $P$. Let $A$ be the union of all $B_{q}$ for which $q$ appears in $Q$. Then it follows from the independence of the family $\AA$, from the fact that $Q$ is reduced, and from the fact that $Q$ contains no $q$ with $q\ini (\eta,\phi)$ that $B_{(\eta,\phi)}$ is not contained in the union of all $\#A_q$
for $q$ appearing in $Q$, and thus 
  $B_{(\eta,\phi)}\setminus A$ is non-empty. Let $\alpha$ be an element of the latter set. Then $t_P(\alpha,0)=(\alpha,k)$ for some $k >0$, whereas $t_Q(\alpha,0)=(\alpha,0)$. Hence, $t_P\neq t_Q$.
\end{proof}

\subsection{From lattices to monoids}

For a $\kappa$-ICE $\{f_{(\eta,\ \phi)}: (\eta,\phi)\in \Lambda_\kappa\}$ on $W$ and a subset $S$ of $\Lambda_\kappa$, we set $F(S)$ to be the monoid \emph{generated} by the functions with index in $S$, i.e., the smallest monoid of functions from $W$ to $W$ which contains all the functions with index in $S$. Another way to put it is that $F(S)$ contains precisely the composites of functions with index in $S$ as well as the identity function on $W$.

In the following, fix a $2^\lambda$-ICE $\{f_{(\eta,\ \phi)}: (\eta,\phi)\in \Lambda_{2^\lambda}\}$ on $\lambda$.
Let $\L=(L,\vee,\wedge)$ be any complete algebraic lattice with $2^\lambda$ compact elements. Let $C\subseteq L$ be the set of compact elements of $\L$ excluding the smallest element. Enumerate $C$, possibly with repetitions, by $\{c_{(\eta,\phi)}:(\eta,\phi)\in \Lambda_{2^\lambda}\}$, and in such a way that the following hold:
\begin{itemize}
    \item[(1)] Every element of $C$ is equal to $c_{(\langle{\alpha}\rangle,\langle{0}\rangle)}$ for some $\alpha<2^\lambda$;
    \item[(2)] For all $(\eta,\phi)\in \Lambda_{2^\lambda}$ and all $\alpha\in 2^\lambda$ we have $c_{(\eta,\phi)}\leq c_{(\eta\ast\alpha,\phi\ast 0)}\vee c_{(\eta\ast\alpha,\phi\ast 1)}$;
    \item[(3)] For all $(\eta,\phi)\in \Lambda_{2^\lambda}$ and all $d,d'\in C$ with $c_{(\eta,\phi)}\leq d\vee d'$, there exists $\alpha<2^\lambda$ such that $d=c_{(\eta\ast\alpha,\phi\ast 0)}$ and $d'=c_{(\eta\ast\alpha,\phi\ast 1)}$.
\end{itemize}

Consider the semilattice $(C,\vee)$ of compact elements of $\L$ without the smallest element. An \emph{ideal} of $(C,\vee)$ is a possibly empty subset of $C$ which is downward closed and closed under finite joins. It is a straightforward consequence of the fact that $\L$ is algebraic that $\L$ is isomorphic to the lattice of ideals of $(C,\vee)$ (the statement is equivalent to Theorem~42 in~\cite{Graetzer}). The meet $\bigwedge_{u\in U} I_u$ of a non-empty set of ideals $\{I_u:u\in U\}$ in this lattice is just their intersection; their join $\bigvee_{u\in U} I_u$ the smallest ideal containing all $I_u$, that is, the set of all elements $c$ of $C$ for which there exist $c_1,\ldots,c_n\in \bigcup_{u\in U} I_u$ such that $c\leq c_1\vee\cdots\vee c_n$.

To every ideal $I\subseteq C$, assign the sets $S(I):=\{(\eta,\phi)\in \Lambda_{2^\lambda}: c_{(\eta,\phi)}\in I\}$, and $F(I):=F(S(I))$.

\begin{lem}
    If $\{I_u:u\in U\}$ is a non-empty set of ideals of $(C,\vee)$, then $\bigvee_{u\in U} F(I_u)=F(\bigvee_{u\in U} I_u)$.
\end{lem}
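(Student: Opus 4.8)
The plan is to prove the two inclusions separately; the inclusion $\bigvee_{u\in U}F(I_u)\subseteq F(\bigvee_{u\in U}I_u)$ is routine, while the reverse inclusion is the heart of the matter. Throughout, write $J:=\bigvee_{u\in U}I_u$. For the easy inclusion I would first note that $I_u\subseteq J$ for every $u\in U$, hence $S(I_u)\subseteq S(J)$, and therefore $F(I_u)=F(S(I_u))\subseteq F(S(J))=F(J)$ by monotonicity of the generation operator $F$. Since $\bigvee_{u\in U}F(I_u)$ is by definition the smallest monoid containing every $F(I_u)$, and $F(J)$ is a monoid containing each of them, the inclusion follows at once.

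For the reverse inclusion $F(J)\subseteq\bigvee_{u\in U}F(I_u)$, the key observation is that $\bigvee_{u\in U}F(I_u)$ is itself a monoid, hence closed under composition; as $F(J)$ is generated by the functions $f_{(\eta,\phi)}$ with $c_{(\eta,\phi)}\in J$, it suffices to show that each such generator lies in $\bigvee_{u\in U}F(I_u)$. So fix $p=(\eta,\phi)\in\Lambda_{2^\lambda}$ with $c_p\in J$. By the description of the join of ideals, there are $c_1,\ldots,c_m\in\bigcup_{u\in U}I_u$ with $c_p\leq c_1\vee\cdots\vee c_m$, and I would induct on the least such $m$.

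In the base case $m=1$ we have $c_p\leq c_1$ with $c_1\in I_u$ for some $u\in U$; since $c_p\in C$ and ideals are downward closed, $c_p\in I_u$, whence $p\in S(I_u)$ and $f_p\in F(I_u)\subseteq\bigvee_{u\in U}F(I_u)$. For the inductive step ($m\geq 2$), set $d:=c_1$ and $d':=c_2\vee\cdots\vee c_m$; both lie in $C$, since a finite nonempty join of nonzero compact elements is again a nonzero compact element, and $c_p\leq d\vee d'$. Enumeration property~(3) then yields an $\alpha<2^\lambda$ with $d=c_{(\eta\ast\alpha,\phi\ast 0)}$ and $d'=c_{(\eta\ast\alpha,\phi\ast 1)}$, and the composition axiom~(i) of the ICE gives $f_p=f_{(\eta\ast\alpha,\phi\ast 0)}\circ f_{(\eta\ast\alpha,\phi\ast 1)}$. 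The first factor is indexed by a pair whose label equals $c_1$, which lies in $I_u$ for some $u\in U$, so it lies in $F(I_u)\subseteq\bigvee_{u\in U}F(I_u)$ exactly as in the base case; the second factor is indexed by a pair whose label equals $d'=c_2\vee\cdots\vee c_m$, a representation of length $m-1$, so by the induction hypothesis it too lies in $\bigvee_{u\in U}F(I_u)$. As this set is a monoid, the composite $f_p$ belongs to it as well, completing the induction.

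The main obstacle, and the reason the lemma is not merely formal, is the inductive step: a generator $f_p$ whose lattice label $c_p$ sits below a join but in no single ideal cannot be placed in any one $F(I_u)$ directly, and it must be split. The point of the construction is that property~(3) of the enumeration produces exactly the index at which to split, while the composition axiom~(i) of the ICE realizes that split at the level of functions; I expect the only delicate bookkeeping to be checking that the label of the second factor is genuinely $d'$ rather than merely $\leq d'$, so that the induction descends on a representation of strictly smaller length. Note that commutativity~(ii) and independence~(iii) of the ICE are not used here; they will instead be needed to establish that the assignment $I\mapsto F(I)$ is injective.
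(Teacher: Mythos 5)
Your proof is correct and follows essentially the same route as the paper's: both reduce to placing each generator $f_{(\eta,\phi)}$ with $c_{(\eta,\phi)}\in\bigvee_{u\in U}I_u$ into $\bigvee_{u\in U}F(I_u)$ by induction on the length of a representation $c_{(\eta,\phi)}\leq c_1\vee\cdots\vee c_m$, using enumeration property~(3) to realize the split $d,d'$ as the labels of $(\eta\ast\alpha,\phi\ast 0)$ and $(\eta\ast\alpha,\phi\ast 1)$ and the composition axiom~(i) to descend. The only differences are cosmetic --- you split off the first summand where the paper splits off the last, and you make explicit the (correct) checks that $d'$ is a nonzero compact element and that property~(3) gives the label of each factor \emph{exactly}, points the paper leaves implicit.
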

\begin{proof}
    The inclusion $\subseteq$ is trivial. For the other direction, it is enough to show that if $c_{(\eta,\phi)}$ is an element of $\bigvee_{u\in U} I_u$, then $f_{(\eta,\phi)}$ is an element of $\bigvee_{u\in U} F(I_u)$. There exist $c_{(\eta_1,\phi_1)},\ldots,c_{(\eta_n,\phi_n)}\in\bigcup_{u\in U} I_u$ such that $c_{(\eta,\phi)}\leq c_{(\eta_1,\phi_1)}\vee\cdots\vee c_{(\eta_n,\phi_n)}$. We use induction over $n$. If $n=1$, then $c_{(\eta,\phi)}\leq c_{(\eta_1,\phi_1)}\in I_u$ for some $u\in U$, so $c_{(\eta,\phi)}\in I_u$. Hence, $f_{(\eta,\phi)}\in F(I_u)$, and we are done. In the induction step, suppose the claim holds for all $1\leq k<n$. Set $d:=c_{(\eta_1,\phi_1)}\vee\cdots\vee c_{(\eta_{n-1},\phi_{n-1})}$ and $d':=c_{(\eta_n,\phi_n)}$. Since $c_{(\eta,\phi)}\leq d\vee d'$, there exist $\alpha<2^\lambda$ such that $(d,d')=(c_{(\eta\ast\alpha,\phi\ast 0)},c_{(\eta\ast\alpha,\phi\ast 1)})$, by Property~(3) of our enumeration. 
   By the induction hypothesis, we have $f_d, f_{d'}\in\bigvee_{u\in U} F(I_u)$. Since $f_{(\eta,\phi)}=f_{(\eta\ast\alpha,\phi\ast 0)}\circ f_{(\eta\ast\alpha,\phi\ast 1)}$, we get that $f_{(\eta,\phi)}\in \bigvee_{u\in U} F(I_u)$ as well, proving the lemma. 
\end{proof}

\begin{lem}
    If $\{I_u:u\in U\}$ is a non-empty set of ideals of $(C,\vee)$, then $\bigcap_{u\in U} F(I_u)=F(\bigcap_{u\in U} I_u)$.
\end{lem}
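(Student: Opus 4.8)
The inclusion $\supseteq$ is immediate: since $\bigcap_{u\in U} I_u\subseteq I_u$ for every $u$, we have $S(\bigcap_{u\in U} I_u)\subseteq S(I_u)$ and hence $F(\bigcap_{u\in U} I_u)\subseteq F(I_u)$ for every $u$, so $F(\bigcap_{u\in U} I_u)\subseteq\bigcap_{u\in U} F(I_u)$. The content of the lemma is the reverse inclusion, and my plan is to attach to every non-identity monoid element a canonical combinatorial representation and then to argue about the indices occurring in it.

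The key tool is that every non-identity element $t$ of any monoid $F(S)$, $S\subseteq\Lambda_{2^\lambda}$, admits a reduced representation that is unique up to equivalence. First I would establish \emph{existence}: given any sequence $Q$ of elements with $t=t_Q$, whenever $Q$ fails to be reduced it contains a sibling pair $(\eta\ast\alpha,\ \phi\ast 0)$, $(\eta\ast\alpha,\ \phi\ast 1)$, which by commutativity~(ii) may be brought together and, by the composition property~(i), replaced by the single index $(\eta,\phi)$ without changing the value $t_Q$. Since each such step strictly shortens the sequence, the process terminates in a reduced sequence computing $t$. Next I would invoke the independence property~(iii) for \emph{uniqueness}: if two reduced sequences compute the same $t$, they must be equivalent, i.e.\ permutations of one another, and so the same indices occur in both. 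Thus $t$ determines a well-defined reduced sequence $P$ (up to equivalence) with $t=t_P$, \emph{independently of which generating set $S$ we started from}.

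Now let $t\in\bigcap_{u\in U} F(I_u)$; I may assume $t\neq\id$, since the identity lies in every monoid including $F(\bigcap_{u\in U} I_u)$. Fix the canonical reduced sequence $P$ with $t=t_P$. The crucial observation is that the reduction step preserves membership in an ideal: if $(\eta\ast\alpha,\ \phi\ast 0)$ and $(\eta\ast\alpha,\ \phi\ast 1)$ both lie in $S(I_u)$, then $c_{(\eta\ast\alpha,\ \phi\ast 0)},c_{(\eta\ast\alpha,\ \phi\ast 1)}\in I_u$, so their join lies in $I_u$ by closure under finite joins, and by Property~(2) together with downward closure we get $c_{(\eta,\phi)}\in I_u$, i.e.\ $(\eta,\phi)\in S(I_u)$. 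Hence, fixing $u$ and running the reduction starting from a representation of $t$ by generators with index in $S(I_u)$ produces a reduced sequence all of whose entries lie in $S(I_u)$; by uniqueness this sequence is equivalent to $P$, so every index occurring in $P$ lies in $S(I_u)$. As this holds for every $u\in U$, every index of $P$ lies in $\bigcap_{u\in U} S(I_u)=S(\bigcap_{u\in U} I_u)$, whence $t=t_P\in F(\bigcap_{u\in U} I_u)$, completing the proof.

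The step I expect to be the genuine obstacle is the uniqueness of the reduced form, for it is exactly here that the independence axiom~(iii) does its work: without it, different ideals could represent one and the same monoid element by reduced sequences with different index sets, and the membership argument of the last paragraph would collapse. By contrast, the existence of the reduced form and the preservation of ideal membership under reduction are routine, resting only on~(i)--(ii) and on Property~(2) together with the closure properties of ideals.
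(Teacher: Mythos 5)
Your proof is correct and follows essentially the same route as the paper's: reduce an arbitrary representation of $t$ over $S(I_u)$ to a reduced one (preserving membership in $S(I_u)$ via Property~(2) and the ideal's closure properties), then use the independence axiom~(iii) to conclude it is equivalent to the canonical reduced sequence $P$, so $P$ lies in every $S(I_u)$. You merely spell out two steps the paper compresses into single sentences --- the termination of the reduction process and the preservation of ideal membership --- which is fine.
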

\begin{proof}
	This time, the inclusion $\supseteq$ is trivial. For the other direction, let $t\in \bigcap_{u\in U} F(I_u)$, and assume that $t$ is not the identity function. Then there is a unique reduced set $P$ such that $t=t_P$, by Property~(iii) of an independent composition engine. Now let $u\in U$ be arbitrary. Then there exists a sequence $Q$ in $S(I_u)$ such that $t=t_Q$. By subsequently replacing two entries $(\eta\ast\alpha,\phi\ast 0), (\eta\ast\alpha,\phi\ast 1)$ in $Q$ by $(\eta,\phi)$, we obtain a reduced sequence $Q'$ which still satisfies $t=t_{Q'}$. Since $I_u$ is closed under joins, and by Property~(2) of our enumeration of the compact elements, all entries of $Q'$ are still elements of $S(I_u)$. By the independence property, $P$ and $Q'$ are equivalent, and hence $P$ is a sequence in $S(I_u)$. But $u$ was arbitrary, so $P$ is a sequence in $\bigcap_{u\in U} S(I_u)$. This proves $t\in F(\bigcap_{u\in U} I_u)$.
\end{proof}

\begin{lem}
	If $I, J$ are distinct ideals of $(C,\vee)$, then $F(I)\neq F(J)$.
\end{lem}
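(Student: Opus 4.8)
The plan is to exploit the uniqueness of reduced representations guaranteed by property~(iii) of the composition engine, exactly in the spirit of the preceding (intersection) lemma, and to pin down a \emph{single generator} that lies in one monoid but not the other. Since $I$ and $J$ are distinct and the asserted inequality $F(I)\neq F(J)$ is symmetric in $I$ and $J$, I may assume without loss of generality that there is a compact element $c\in I\setminus J$. By Property~(1) of the enumeration of the compact elements, every element of $C$ occurs as $c_{(\langle{\alpha}\rangle,\langle{0}\rangle)}$ for some $\alpha<2^\lambda$, so I fix such an $\alpha$ with $c=c_{(\langle{\alpha}\rangle,\langle{0}\rangle)}$. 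Then $(\langle{\alpha}\rangle,\langle{0}\rangle)\in S(I)$, whence the generator $f_{(\langle{\alpha}\rangle,\langle{0}\rangle)}$ belongs to $F(I)$. The entire proof then reduces to showing that this generator does \emph{not} belong to $F(J)$.

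To establish this I would argue by contradiction, assuming $f_{(\langle{\alpha}\rangle,\langle{0}\rangle)}\in F(J)$. First I note that $f_{(\langle{\alpha}\rangle,\langle{0}\rangle)}\neq\id$: the one-entry sequence with entry $(\langle{\alpha}\rangle,\langle{0}\rangle)$ is reduced and inequivalent to the empty sequence, so property~(iii) separates $f_{(\langle{\alpha}\rangle,\langle{0}\rangle)}$ from the identity. Hence $f_{(\langle{\alpha}\rangle,\langle{0}\rangle)}=t_Q$ for some \emph{nonempty} sequence $Q$ in $S(J)$. I then apply the same reduction procedure as in the preceding lemma: repeatedly replacing a pair of entries $(\eta\ast\alpha',\phi\ast 0),(\eta\ast\alpha',\phi\ast 1)$ by $(\eta,\phi)$, which by Composition~(i) and Commutativity~(ii) leaves $t_Q$ unchanged, I obtain a reduced sequence $Q'$ with $t_{Q'}=t_Q=f_{(\langle{\alpha}\rangle,\langle{0}\rangle)}$. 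The key point is that Property~(2) of the enumeration, combined with the fact that $J$ is an ideal (downward closed and closed under finite joins), guarantees that every entry produced in this process still lies in $S(J)$; thus all entries of $Q'$ are in $S(J)$.

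Now let $P$ be the one-entry sequence whose single entry is $(\langle{\alpha}\rangle,\langle{0}\rangle)$. Then $P$ is reduced and $t_P=f_{(\langle{\alpha}\rangle,\langle{0}\rangle)}=t_{Q'}$. Since $P$ and $Q'$ are both reduced and yield the same transformation, the Independence property~(iii) forces them to be equivalent; as $P$ has a single entry, $Q'$ must be $P$ itself. Hence $(\langle{\alpha}\rangle,\langle{0}\rangle)\in S(J)$, i.e.\ $c=c_{(\langle{\alpha}\rangle,\langle{0}\rangle)}\in J$, contradicting $c\in I\setminus J$. This contradiction shows $f_{(\langle{\alpha}\rangle,\langle{0}\rangle)}\notin F(J)$, and therefore $F(I)\neq F(J)$.

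I expect the only real subtlety -- the main obstacle -- to be the bookkeeping around the reduction step: one must verify that passing from $Q$ to $Q'$ neither leaves $S(J)$ (handled by Property~(2) together with $J$ being an ideal) nor collapses $Q'$ to the empty sequence (handled by the observation that $f_{(\langle{\alpha}\rangle,\langle{0}\rangle)}\neq\id$). Both points are exactly analogous to steps already carried out in the preceding lemma, so no genuinely new machinery is required; the argument is essentially a pointwise application of the uniqueness of reduced representations to a single distinguishing generator.
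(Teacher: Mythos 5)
Your proof is correct and follows essentially the same route as the paper: pick a generator indexed by an element of $S(I)\setminus S(J)$, assume it lies in $F(J)$, reduce the witnessing sequence $Q$ to a reduced $Q'$ still inside $S(J)$ via Property~(2) and ideal-closure, and invoke the independence property~(iii) to force $Q'$ to be the one-entry sequence, yielding a contradiction. Your use of Property~(1) to place the index at level one is an inessential specialization (the paper takes an arbitrary element of $S(I)\setminus S(J)$), and your explicit check that the generator is not the identity is a harmless extra precaution the paper leaves implicit.
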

\begin{proof}
	Assume without loss of generality that $I\setminus J$ is non-empty. Then $S(I)\setminus S(J)$ is non-empty as well; let $(\eta,\phi)$ be an element therein. If we had $f_{(\eta,\phi)}\in F(J)$, then there would exist a sequence $Q$ in $S(J)$ such that $f_{(\eta,\phi)}=t_Q$. As in the proof of the preceding lemma, we could moreover assume that $Q$ is reduced. But then by the independence property, the sequence $Q$ would be equivalent with the sequence $\langle{(\eta,\phi)}\rangle$, implying $(\eta,\phi)\in S(J)$ -- a contradiction. So $f_{(\eta,\phi)}\nin F(J)$ but $f_{(\eta,\phi)}\in F(I)$, and we are done.
\end{proof}

It follows from the above that the assignment $I\mapsto F(I)$ is an injective mapping from the lattice of ideals of $(C,\vee)$ to $\Mon(\lambda)$ which preserves arbitrary non-empty joins and meets, proving our theorem. Moreover, under this mapping the smallest element of $\L$, represented by the empty ideal in $(C,\vee)$, is sent to the monoid which contains only the identity function, and hence to the smallest element of $\Mon(\lambda)$. This justifies the remark after the theorem.

\providecommand{\MRhref}[2]{%
  \href{http://www.ams.org/mathscinet-getitem?mr=#1}{#2}
}
\providecommand{\href}[2]{#2}

\end{document}